\title{A note on the abelianizations of finite-index subgroups of the mapping class group}
\author{Andrew Putman}
\theoremstyle{plain}
\newtheorem{theorem}{Theorem}[section]
\newtheorem{maintheorem}{Theorem}
\newtheorem{maincorollary}[maintheorem]{Corollary}
\newtheorem{lemma}[theorem]{Lemma}
\theoremstyle{definition}
\theoremstyle{remark}
\newtheorem*{remark}{Remark}
\DeclareMathOperator{\Mod}{Mod}
\newcommand\JKer{\text{${\mathcal K}$}}
\newcommand\Torelli{\text{${\mathcal I}$}}
\DeclareMathOperator{\Sp}{Sp}
\newcommand\R{\text{$\mathbb{R}$}}
\newcommand\Z{\text{$\mathbb{Z}$}}
\DeclareMathOperator{\HH}{H}
\newcommand\Span[1]{\text{$\langle #1 \rangle$}}
\newcommand\Figure[3]{
\begin{figure}[t]
\centering
\centerline{\psfig{file=#2,scale=60}}
\caption{#3}
\label{#1}
\end{figure}}
\begin{document}

\maketitle

\section{Introduction}

Let $\Sigma_{g,b}^p$ be an oriented genus $g$ surface with $b$ boundary components and $p$ punctures and
let $\Mod(\Sigma_{g,b}^p)$ be its {\em mapping class group}, that is, the group of isotopy classes of
orientation--preserving diffeomorphisms of $\Sigma_{g,b}^p$ that fix the boundary components and punctures
pointwise (we will omit $b$ or $p$ when they are zero).  
A long--standing conjecture of Ivanov (see \cite{IvanovProblems} for a recent
discussion) says that for $g \geq 3$, the
group $\Mod(\Sigma_{g,b}^p)$ does not virtually surject onto $\Z$.  In other words, if $\Gamma$ is
a finite-index subgroup of $\Mod(\Sigma_{g,b}^p)$, then $\HH_1(\Gamma;\R) = 0$.

The goal of this note is to offer some evidence for this conjecture.  If $G$ is a group and
$g \in G$, then we will denote by $[g]_G$ the corresponding element of $\HH_1(G;\R)$.  Also, for
a simple closed curve $\gamma$ on $\Sigma_{g,b}^p$, we will denote by $T_{\gamma}$ the corresponding
right Dehn twist.  Observe that if $\Gamma$ is any finite-index subgroup of $\Mod_{g,b}^p$, then
$T_{\gamma}^n \in \Mod_{g,b}^p$ for some $n \geq 1$.  Our first result is the following.

\begin{maintheorem}[Powers of twists vanish]
\label{theorem:dehntwistsvanish}
For some $g \geq 3$, let $\Gamma<\Mod(\Sigma_{g,b}^p)$ satisfy $[\Mod(\Sigma_{g,b}^p):\Gamma]<\infty$
and let $\gamma$ be a simple closed curve on $\Sigma_{g,b}^p$.  Pick
$n \geq 1$ such that $T_{\gamma}^n \in \Gamma$.  Then $[T_{\gamma}^n]_{\Gamma} = 0$.
\end{maintheorem}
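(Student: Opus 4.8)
The plan is to prove the equivalent dual statement: for every homomorphism $\phi\colon\Gamma\to\R$ one has $\phi(T_\gamma^n)=0$ (this suffices because $\Hom(\Gamma,\R)$ separates points of $\HH_1(\Gamma;\R)$). Since $\{k\in\Z : T_\gamma^k\in\Gamma\}$ is a subgroup $n_1\Z$ and $\phi(T_\gamma^{kn_1})=k\,\phi(T_\gamma^{n_1})$, it is enough to kill a single convenient power. First I would pass to a normal finite-index subgroup $\Gamma_0\trianglelefteq\Mod(\Sigma_{g,b}^p)$ with $\Gamma_0\le\Gamma$: restricting $\phi$ to $\Gamma_0$ reduces the problem to $\Gamma_0$, and I set $Q=\Mod(\Sigma_{g,b}^p)/\Gamma_0$ and $N=|Q|$, so that $h^N\in\Gamma_0$ for all $h$. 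The one external input is that $\Mod(\Sigma_{g,b}^p)$ is perfect for $g\ge 3$, so $\HH_1(\Mod(\Sigma_{g,b}^p);\R)=0$; feeding the extension $1\to\Gamma_0\to\Mod(\Sigma_{g,b}^p)\to Q\to 1$ into the five-term exact sequence and using $\HH_i(Q;\R)=0$ for $i>0$ yields $\HH_1(\Gamma_0;\R)_Q=0$, equivalently $\HH_1(\Gamma_0;\R)^Q=0$. Dually, averaging shows $\sum_{q\in Q}q\cdot\phi=0$ for every $\phi\in\Hom(\Gamma_0,\R)$.

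For a nonseparating simple closed curve $\delta$ put $F(\delta)=\phi(T_\delta^N)$. Because $\phi$ lands in an abelian group and $T_{h\delta}=hT_\delta h^{-1}$, the value $F(\delta)$ depends only on the $\Gamma_0$-orbit of $\delta$; thus $F$ takes finitely many values. Evaluating $\sum_{q}q\cdot\phi=0$ on $T_\gamma^N$ and using that $\Mod(\Sigma_{g,b}^p)$ acts transitively on nonseparating curves produces a relation $\sum_{O}m_O\,F(O)=0$, a positive-integer-weighted sum over all $\Gamma_0$-orbits $O$ of nonseparating curves. Consequently, once I know that $F$ is constant on nonseparating curves, that constant must vanish, giving $\phi(T_\gamma^N)=0$ for nonseparating $\gamma$.

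So the heart of the matter is to show $F(\gamma)=F(\gamma')$ for any two nonseparating curves. The key move is a swap: given disjoint nonseparating $\gamma,\gamma'$ admitting $f\in\Mod(\Sigma_{g,b}^p)$ with $f\gamma=\gamma'$ and $f\gamma'=\gamma$, set $w=T_\gamma T_{\gamma'}^{-1}$. Then $fwf^{-1}=T_{\gamma'}T_\gamma^{-1}=w^{-1}$, and since $T_\gamma$ and $T_{\gamma'}$ commute, $w^N=T_\gamma^N T_{\gamma'}^{-N}\in\Gamma_0$ with $\phi(w^N)=F(\gamma)-F(\gamma')$. Moreover $fw^Nf^{-1}=w^{-N}$, so $(\bar f\cdot\phi)(w^N)=\phi(f^{-1}w^Nf)=\phi(w^{-N})=-\phi(w^N)$. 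Any two nonseparating curves are joined by a chain of such elementary swaps, so it remains to force $\phi(w^N)=0$ from this relation.

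I expect this last step to be the main obstacle. If the swapping class $f$ could be taken in $\Gamma_0$, the conjugation-invariance of $\phi$ would give $\phi(w^N)=-\phi(w^N)=0$ at once; but then $\gamma,\gamma'$ already lie in a single $\Gamma_0$-orbit and nothing is gained. The genuine content is to connect distinct $\Gamma_0$-orbits, which forces $f\notin\Gamma_0$, and there $\bar f\cdot\phi$ is a different character than $\phi$. My plan to overcome this is to assemble the relations $(\bar f\cdot\phi)(w^N)=-\phi(w^N)$ over a full $Q$-orbit of such configurations and play them against the identity $\sum_{q\in Q}q\cdot\phi=0$, so that the off-diagonal characters cancel and one is left with a nonzero multiple of $F(\gamma)-F(\gamma')$ equal to $0$; making this cancellation go through — equivalently, showing that the resulting $\Mod(\Sigma_{g,b}^p)$-equivariant linear system on the finite set of orbit-values admits only the constant solution — is the delicate part. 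A separating curve is then handled by the same swap mechanism applied to two disjoint homologous separating curves, or by reducing a suitable power of its twist to a product of commutators of nonseparating twists, yielding $\phi(T_\gamma^n)=0$ for all $\phi$ and hence $[T_\gamma^n]_\Gamma=0$.
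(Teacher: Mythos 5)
Your preliminary reductions are all correct: dualizing to characters $\phi\colon\Gamma\to\R$, passing to a normal finite-index $\Gamma_0\trianglelefteq\Mod(\Sigma_{g,b}^p)$, deriving the averaging identity $\sum_{q\in Q}q\cdot\phi=0$ from perfectness of $\Mod(\Sigma_{g,b}^p)$ ($g\geq 3$) plus the five-term sequence and $\HH_*(Q;\R)=0$, and the resulting relation $\sum_{O}m_O F(O)=0$ over $\Gamma_0$-orbits. But the step you defer --- that $F$ is constant across $\Gamma_0$-orbits of nonseparating curves --- is the entire content of the theorem, and the mechanism you propose for closing it is vacuous. The swap relation $(\bar f\cdot\phi)(w^N)=-\phi(w^N)$ is a tautology: for \emph{any} mapping class $f$ and \emph{any} curve $\delta$ one has $(\bar f\cdot\phi)(T_\delta^N)=\phi(T_{f^{-1}\delta}^N)$, i.e.\ $F_{\bar f\cdot\phi}(\delta)=F_\phi(f^{-1}\delta)$, and your relation is exactly this equivariance evaluated at $\delta\in\{\gamma,\gamma'\}$ for a swapping $f$; it imposes no constraint whatsoever. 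Worse, since $\Mod(\Sigma_{g,b}^p)$ acts transitively on nonseparating curves and $\Gamma_0$ is normal, $Q$ permutes the $\Gamma_0$-orbits transitively, so all the weights $m_O$ are equal and your averaging identity collapses to the single relation $\sum_O F(O)=0$. Any assignment of orbit values summing to zero --- including wildly nonconstant ones --- satisfies every relation you can derive from conjugation and equivariance, so the ``equivariant linear system'' you hope to solve genuinely has nonconstant solutions. What would be needed are relations among \emph{powers} of twists that mix distinct orbits (lantern-type relations survive in $\HH_1(\Mod;\R)$ but do not descend to $N$-th powers), and the absence of such relations is precisely the difficulty of Ivanov's conjecture. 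The separating-curve case is likewise asserted, not proven.

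The paper's proof avoids comparing orbits entirely, and the two ideas it uses are exactly what your outline is missing. First, it makes the twist \emph{central}: choose a subsurface $S\subset\Sigma_{g,b}^p$ of genus at least $2$ having $\gamma$ as a boundary component $\beta$ (the complement of an annular neighborhood if $\gamma$ is nonseparating; the larger-genus side if $\gamma$ separates, using $g\geq 3$), and pull $\Gamma$ back to $\Gamma'=i^{-1}(\Gamma)<\Mod(S)$. Capping $\beta$ by a punctured disc yields compatible central extensions $1\to\Z\to\Gamma'\to\overline{\Gamma}'\to 1$ and $1\to\Z\to\Mod(S)\to\Mod(\overline{S})\to 1$, with the copies of $\Z$ generated by $T_\beta^n$ and $T_\beta$. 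Second, it transports perfectness of the ambient group to the finite-index subgroup through degree-\emph{two} homology: comparing the five-term sequences, $[T_\beta^n]_{\Gamma'}=0$ follows once $\HH_2(\overline{\Gamma}';\R)\to\R$ is onto, which holds because $\HH_1(\Mod(S);\R)=0$ (genus $\geq 2$) makes $\HH_2(\Mod(\overline{S});\R)\to\R$ onto, while the transfer map (Lemma \ref{lemma:transfer} in degree $2$) makes $\HH_2(\overline{\Gamma}';\R)\to\HH_2(\Mod(\overline{S});\R)$ onto. In other words, finite index buys surjectivity on $\HH_2$ with $\R$ coefficients, and centrality of the boundary twist converts that surjectivity into the vanishing of its class in $\HH_1$; working purely inside $\HH_1$ with conjugation relations, as your proposal does, cannot get off the ground.
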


\begin{remark}
After this paper was written, Bridson informed us that in
unpublished work, he had proven a result about mapping class group actions on CAT(0) spaces 
that implies Theorem \ref{theorem:dehntwistsvanish}.  Bridson's work will appear
in \cite{Bridson}.
\end{remark}

We use this to verify Ivanov's conjecture for a class of examples.
For a long time, the only positive evidence for Ivanov's conjecture was a result of Hain \cite{HainTorelli} that 
says that it holds for all finite--index subgroups containing the {\em Torelli group} $\Torelli_{g,b}^p$, that is,
the kernel of the action of $\Mod(\Sigma_{g,b}^p)$ on $\HH_1(\Sigma_g;\Z)$ induced by filling in all
the punctures and boundary components.  The group $\Torelli_{g,b}^p$ contains the
{\em Johnson kernel} $\JKer_{g,b}^p$, which is the subgroup generated by Dehn
twists about separating curves.  A result of Johnson \cite{JohnsonHomo} says that
$\JKer_{g,b}^p$ is an infinite-index subgroup of $\Torelli_{g,b}^p$.

For a subgroup $\Gamma$
of $\Mod(\Sigma_{g,b}^p)$, denote by $K(\Gamma)$ the subgroup of $\Gamma \cap \JKer_{g,b}^p$
generated by the set
$$\{\text{$T_{\gamma}^n$ $|$ $\gamma$ a separating curve, $n \in \Z$, and $T_{\gamma}^n \in \Gamma$}\}.$$
If $\JKer_{g,b}^p < \Gamma$, then $K(\Gamma) = \Gamma \cap \JKer_{g,b}^p$, but the converse
does not hold.  Our second result is the following.

\begin{maintheorem}[Subgroups containing large pieces of Johnson kernel]
\label{theorem:jkervanish}
For some $g \geq 3$, let $\Gamma<\Mod(\Sigma_{g,b}^p)$ satisfy $[\Mod(\Sigma_{g,b}^p):\Gamma]<\infty$.
Assume that $[\Gamma \cap \JKer_{g,b}^n:K(\Gamma)] < \infty$.  Then $\HH_1(\Gamma;\R)=0$.
\end{maintheorem}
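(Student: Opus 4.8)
The plan is to prove $\HH_1(\Gamma;\R)=0$ by showing that every homomorphism $\phi\colon\Gamma\to\R$ is trivial; since $\Gamma$ is finitely generated this is equivalent to the vanishing of $\HH_1(\Gamma;\R)$. I would exploit the filtration $\JKer_{g,b}^p\subset\Torelli_{g,b}^p\subset\Mod(\Sigma_{g,b}^p)$ and kill $\phi$ on each of the three successive pieces using a different input: Theorem~\ref{theorem:dehntwistsvanish} together with the finite-index hypothesis for the bottom piece, the Johnson homomorphism and symplectic representation theory for the middle piece, and property~(T) of $\Sp(2g,\Z)$ for the top piece.

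\emph{Step 1: $\phi$ vanishes on $\Gamma\cap\JKer_{g,b}^p$.} By construction $K(\Gamma)$ is generated by twist powers $T_\gamma^n\in\Gamma$ about separating curves $\gamma$, and Theorem~\ref{theorem:dehntwistsvanish} gives $[T_\gamma^n]_\Gamma=0$; since $\phi$ factors through $\HH_1(\Gamma;\R)$ this yields $\phi(T_\gamma^n)=0$, so $\phi$ annihilates $K(\Gamma)$. The hypothesis $[\Gamma\cap\JKer_{g,b}^p:K(\Gamma)]<\infty$ then finishes this step: for any $x\in\Gamma\cap\JKer_{g,b}^p$ some power $x^m$ lies in $K(\Gamma)$, whence $m\,\phi(x)=\phi(x^m)=0$ and $\phi(x)=0$.

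\emph{Step 2: $\phi$ vanishes on $\Gamma\cap\Torelli_{g,b}^p$.} By Johnson's theorem (using $g\geq 3$) the group $\JKer_{g,b}^p$ is the kernel of the Johnson homomorphism $\tau\colon\Torelli_{g,b}^p\to V_\Z$, where $V_\Z$ is a finitely generated abelian group whose rationalization $V:=V_\Z\otimes\R$ is an $\Sp(2g,\Z)$-representation containing no trivial constituent. By Step~1, $\phi|_{\Gamma\cap\Torelli_{g,b}^p}$ kills $\Gamma\cap\JKer_{g,b}^p=\Ker(\tau|_{\Gamma\cap\Torelli_{g,b}^p})$, so it descends to a homomorphism $\bar\phi\colon\tau(\Gamma\cap\Torelli_{g,b}^p)\to\R$. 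Since $\Gamma\cap\Torelli_{g,b}^p$ has finite index in $\Torelli_{g,b}^p$, its image $\tau(\Gamma\cap\Torelli_{g,b}^p)$ has finite index in $V_\Z$, so $\bar\phi$ extends uniquely to a linear functional on $V$. Because $\phi$ is defined on all of $\Gamma$ it is invariant under conjugation by $\Gamma$, and equivariance of $\tau$ translates this into the statement that $\bar\phi\in V^{*}$ is fixed by $\bar\Gamma:=\Gamma/(\Gamma\cap\Torelli_{g,b}^p)$, a finite-index subgroup of $\Sp(2g,\Z)$. By the Borel density theorem $\bar\Gamma$ is Zariski dense in $\Sp(2g)$, so the stabilizer of $\bar\phi$, being Zariski closed and containing $\bar\Gamma$, is all of $\Sp(2g)$; as $V^{*}$ has no trivial constituent for $g\geq 3$, this forces $\bar\phi=0$, i.e.\ $\phi|_{\Gamma\cap\Torelli_{g,b}^p}=0$.

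\emph{Step 3: $\phi=0$.} Now $\phi$ factors through $\bar\Gamma\leq\Sp(2g,\Z)$, and for $g\geq 3$ the group $\Sp(2g,\Z)$ has Kazhdan's property~(T), so every finite-index subgroup has finite abelianization; hence $\HH_1(\bar\Gamma;\R)=0$ and $\phi=0$ on $\bar\Gamma$, so $\phi=0$ on $\Gamma$. The step that genuinely uses the hypotheses of the theorem is Step~1, where Theorem~\ref{theorem:dehntwistsvanish} and the finite-index assumption combine; I expect the main technical obstacle to lie in Step~2, namely correctly identifying $\JKer_{g,b}^p$ as $\Ker\tau$ in the bounded and punctured settings and verifying that the Johnson image $V$ carries no trivial $\Sp(2g)$-constituent for all relevant $(g,b,p)$, so that upgrading $\bar\Gamma$-invariance to full $\Sp(2g)$-invariance really kills $\bar\phi$.
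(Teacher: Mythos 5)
Your outline is the paper's proof in different packaging: you kill a homomorphism $\phi\colon\Gamma\to\R$ along the filtration $\Gamma\cap\JKer_{g,b}^p\subset\Gamma\cap\Torelli_{g,b}^p\subset\Gamma$ using exactly the paper's four inputs (Theorem \ref{theorem:dehntwistsvanish} together with the finite-index hypothesis on $K(\Gamma)$; the computation of $\Torelli_{g,b}^p/\JKer_{g,b}^p$; Borel density; property (T)). The repackaging is legitimate: your ``some power of $x$ lands in $K(\Gamma)$'' argument replaces the paper's transfer map (Lemma \ref{lemma:transfer}), your ``$\bar\Gamma$-invariant functional'' argument is the dual formulation of the paper's vanishing of coinvariants (Theorem \ref{theorem:borelstab}, via $(V_{\bar\Gamma})^{*}\cong(V^{*})^{\bar\Gamma}$), and working with $\Hom(-,\R)$ instead of the two 5-term exact sequences is fine. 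Steps 1 and 3 are complete and correct as written.

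The genuine gap is the opening citation of Step 2. The claim that $\JKer_{g,b}^p=\Ker\tau$ for a Johnson homomorphism $\tau$ whose rationalized image carries no trivial $\Sp_{2g}(\R)$-constituent is a theorem of Johnson only when $b+p\le 1$; for general $(b,p)$ it is precisely the paper's Lemma \ref{lemma:torellimodkg}, which asserts $\Torelli_{g,b}^p/\JKer_{g,b}^p\cong(\wedge^3 H)/H\oplus H^{b+p}$ ($\Sp$-equivariantly) and which the paper must prove from scratch: by induction on the number of punctures via the Birman exact sequence, showing that the point-pushing subgroup maps to a quotient of $H$ inside $\Torelli_{g}^p/\JKer_{g}^p$ (because commutators of simple loops and loops around punctures are realized by products of separating twists), and then pinning down that quotient as all of $H$ by a dimension count against Hain's computation of $\HH_1(\Torelli_{g}^p;\R)$ combined with Johnson's result that separating twists vanish there. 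None of this is quotable off the shelf, and everything downstream in your Step 2 --- the existence of $\tau$ with kernel exactly $\JKer_{g,b}^p$, its equivariance, and the absence of trivial constituents in $V$ --- depends on it. You correctly flagged this as the ``main technical obstacle,'' but flagging is not filling: as it stands, your proposal reproduces the paper's argument modulo exactly the one new lemma the paper had to add to the literature.
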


As a corollary, we obtain the following result, which was recently proven by Boggi \cite{Boggi} via
a difficult algebro-geometric argument under the assumption $b=p=0$.

\begin{maincorollary}[Subgroups containing Johnson kernel]
For some $g \geq 3$, let $\Gamma<\Mod(\Sigma_{g,b}^p)$ satisfy $[\Mod(\Sigma_{g,b}^p):\Gamma]<\infty$.
Assume that $\JKer_{g,b}^n < \Gamma$.  Then $\HH_1(\Gamma;\R)=0$.
\end{maincorollary}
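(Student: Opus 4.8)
The plan is to deduce the corollary directly from Theorem~\ref{theorem:jkervanish} by checking that its hypothesis holds automatically under the stronger assumption $\JKer_{g,b}^p < \Gamma$. The key observation is that $\JKer_{g,b}^p$ is by definition \emph{generated} by Dehn twists about separating curves, so the containment $\JKer_{g,b}^p < \Gamma$ forces $K(\Gamma)$ to exhaust all of $\Gamma \cap \JKer_{g,b}^p$; the relevant index is then not merely finite but equal to $1$.

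First I would recall that $\JKer_{g,b}^p$ is the subgroup of $\Mod(\Sigma_{g,b}^p)$ generated by the set of Dehn twists $T_{\gamma}$ with $\gamma$ a separating simple closed curve. Assuming $\JKer_{g,b}^p < \Gamma$, every such twist $T_{\gamma}$ lies in $\Gamma$; in particular $T_{\gamma} = T_{\gamma}^1 \in \Gamma$, so each generator $T_{\gamma}$ of $\JKer_{g,b}^p$ appears among the elements $T_{\gamma}^n$ (here with $n = 1$ and $\gamma$ separating) that define the generating set of $K(\Gamma)$. Hence all generators of $\JKer_{g,b}^p$ lie in $K(\Gamma)$, giving $\JKer_{g,b}^p \subseteq K(\Gamma)$. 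For the reverse inclusion, note that by definition $K(\Gamma) \subseteq \Gamma \cap \JKer_{g,b}^p$, and under our hypothesis $\Gamma \cap \JKer_{g,b}^p = \JKer_{g,b}^p$. Combining the two inclusions yields $K(\Gamma) = \Gamma \cap \JKer_{g,b}^p = \JKer_{g,b}^p$, so in particular $[\Gamma \cap \JKer_{g,b}^p : K(\Gamma)] = 1 < \infty$. The hypotheses of Theorem~\ref{theorem:jkervanish} are therefore met, and that theorem immediately gives $\HH_1(\Gamma;\R) = 0$.

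There is essentially no obstacle in deducing the corollary itself: it is an immediate consequence of Theorem~\ref{theorem:jkervanish} once one observes that the ``large piece of the Johnson kernel'' condition is automatic here, since the index in question is $1$. All of the genuine difficulty lies in the proof of Theorem~\ref{theorem:jkervanish}, which I am taking as given. The only subtlety worth flagging is that one must use the fact that $\JKer_{g,b}^p$ is generated by the separating twists themselves, rather than by some auxiliary larger set, so that each generator genuinely occurs among the $T_{\gamma}^n \in \Gamma$ defining $K(\Gamma)$; this is precisely the defining property of the Johnson kernel recalled in the introduction.
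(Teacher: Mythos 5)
Your proof is correct and matches the paper's intended argument exactly: the paper derives the corollary from Theorem~\ref{theorem:jkervanish} via the same observation (stated explicitly in its introduction) that $\JKer_{g,b}^p < \Gamma$ forces $K(\Gamma) = \Gamma \cap \JKer_{g,b}^p$, so the index hypothesis holds with index $1$.
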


\begin{remark}
McCarthy \cite{McCarthyCofinite} proved that Ivanov's conjecture fails in the case $g=2$.
\end{remark}

\paragraph{Acknowledgments.}
I wish to thank Martin Bridson, Benson Farb, Thomas Koberda, Dan Margalit, and Ben Wieland for useful 
comments and conversations.  
I also wish to thank Dongping Zhuang for showing me how to slightly weaken the hypotheses in my
original version of Theorem \ref{theorem:jkervanish}.

\section{Notation and basic facts about group homology}

If $M$ is a $G$-module, then $M_G$ will denote the {\em coinvariants} of the action, that is, the
quotient of $M$ by the submodule generated by the set $\{\text{$x - g(x)$ $|$ $x \in M$, $g \in G$}\}$.  This
appears in the 5-term exact sequence \cite[Corollary VII.6.4]{BrownCohomology}, which asserts the following.  If
$$\begin{CD}
1 @>>> K @>>> G @>>> Q @>>> 1
\end{CD}$$
is a short exact sequence of groups, then for any ring $R$, there is an exact sequence
$$\begin{CD}
\HH_2(G;R) @>>> \HH_2(Q;R) @>>> (\HH_1(K;R))_Q @>>> \HH_1(G;R) @>>> \HH_1(Q;R) @>>> 0.
\end{CD}$$

If $G_2 < G_1$ are groups satisfying $[G_1:G_2] < \infty$ and $R$ is a ring, then for all $k$ there exists a {\em transfer
map} of the form $t : \HH_k(G_1;R) \rightarrow \HH_k(G_2;R)$
(see, e.g., \cite[Chapter III.9]{BrownCohomology}).
The key property of $t$ (see \cite[Proposition III.9.5]{BrownCohomology}) 
is that if $i : \HH_k(G_2;R) \rightarrow \HH_k(G_1;R)$ is the map induced
by the inclusion, then $i \circ t : \HH_k(G_1;R) \rightarrow \HH_k(G_1;R)$ is multiplication by  $[G_1:G_2]$.
In particular, if $R = \R$, then we obtain a right inverse $\frac{1}{[G_1:G_2]} t$ to $i$.  This yields
the following standard lemma.

\begin{lemma}
\label{lemma:transfer}
Let $G_2 < G_1$ be groups satisfying $[G_1:G_2] < \infty$.  For all $k$, the
map $\HH_k(G_2;\R) \rightarrow \HH_k(G_1;\R)$ is surjective.
\end{lemma}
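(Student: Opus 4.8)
The plan is to read the lemma straight off the two properties of the transfer map recalled immediately above, so that essentially no new work is required. Write $n = [G_1:G_2]$, and keep the notation from the discussion preceding the statement: $t : \HH_k(G_1;\R) \rightarrow \HH_k(G_2;\R)$ is the transfer map and $i : \HH_k(G_2;\R) \rightarrow \HH_k(G_1;\R)$ is the map induced by the inclusion. The single input I would invoke is the identity $i \circ t = n \cdot \text{id}$ on $\HH_k(G_1;\R)$.

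The one substantive observation is that, because the coefficient ring is $\R$, the homology $\HH_k(G_1;\R)$ is a real vector space and multiplication by the nonzero integer $n$ is an invertible linear map (it is just scaling by $n \neq 0$). This is the only point at which real — or at least characteristic-zero — coefficients are used; over $\Z$ the same formula would merely show that the cokernel of $i$ is annihilated by $n$ rather than being trivial. Given invertibility, I would rescale the transfer and set $s = \frac{1}{n}\,t$, so that the identity above becomes $i \circ s = \text{id}$. Thus $s$ is a genuine right inverse to $i$, and a map admitting a right inverse is surjective; since $k$ was arbitrary this establishes the claim for all $k$.

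The main (and essentially only) obstacle here is not technical but a matter of having the right machinery in place: one needs both the existence of the transfer map and the formula $i \circ t = n \cdot \text{id}$, each of which is quoted above from \cite{BrownCohomology}. With those in hand, the proof reduces to the one-line invertibility remark, so I would keep the write-up to a couple of sentences rather than belaboring it.
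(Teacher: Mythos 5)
Your proposal is correct and is exactly the paper's argument: the paper likewise notes that over $\R$ one may rescale the transfer to $\frac{1}{[G_1:G_2]}\,t$, obtaining a right inverse to $i$ and hence surjectivity. Nothing is missing.
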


\section{Proof of Theorem \ref{theorem:dehntwistsvanish}}

Let $n \geq 1$ be the smallest integer such that $T^n_{\gamma} \in \Gamma$.

We first claim that there exists a subsurface $S \hookrightarrow \Sigma_{g,b}^p$ whose genus
is at least $2$ with the following property.  Let $i : \Mod(S) \rightarrow \Mod(\Sigma_{g,b}^p)$
be the induced map (``extend by the identity'').  Then there exists some boundary component $\beta$
of $S$ such that $i(T_{\beta}) = T_{\gamma}$.
There are two cases.  If $\gamma$ is nonseparating, then let $S$ be the complement
of a regular neighborhood of $\gamma$.  Observe that $S \cong \Sigma_{g-1,b+2}^p$, so the genus
of $S$ is at least $2$.  If instead $\gamma$ is separating, then let $S$ be the component of $\Sigma_{g,b}^p$
cut along $\gamma$ whose genus is maximal.  Since $g \geq 3$, this subsurface must have genus at least $2$.  The
claim follows.

Define $\Gamma' = i^{-1}(\Gamma)$.  We have $T_{\beta}^n \in \Gamma'$, and it
is enough to show that $[T_{\beta}^n]_{\Gamma'} = 0$.  Let $\overline{S}$ be the result of gluing
a punctured disc to $\beta$ and let $\overline{\Gamma}'$ be the image of $\Gamma'$ in $\Mod(\overline{S})$. 
There is a diagram of central extensions
$$\begin{CD}
1 @>>> \Z             @>>> \Gamma' @>>> \overline{\Gamma}' @>>> 1 \\
@.     @VV{\times n}V      @VVV         @VVV                    @. \\
1 @>>> \Z             @>>> \Mod(S) @>>> \Mod(\overline{S}) @>>> 1
\end{CD}$$
with $\Z < \Mod(S)$ and $\Z < \Gamma'$ generated by $T_{\beta}$ and $T_{\beta}^n$, respectively.  
The last 4 terms of the corresponding diagram of 5-term exact sequences are 
$$\begin{CD}
\HH_2(\overline{\Gamma}';\R) @>{f_1}>> \R             @>>> \HH_1(\Gamma';\R) @>>> \HH_1(\overline{\Gamma}';\R) @>>> 0 \\
@VV{f_2}V                              @VV{\cong}V         @VVV                   @VVV                              @.\\
\HH_2(\Mod(\overline{S});\R) @>{f_3}>> \R             @>>> \HH_1(\Mod(S);\R) @>>> \HH_1(\Mod(\overline{S});\R) @>>> 0
\end{CD}$$
We remark that there are no nontrivial coinvariants in these sequences since our extensions are central.
We must show that $f_1$ is a surjection.  Since $S$ has genus at least 2, we have
$\HH_1(\Mod(S);\R)=0$ (see, e.g., \cite{KorkmazSurvey}), so $f_3$ is a surjection.
Since $[\Mod(\overline{S}):\overline{\Gamma}'] < \infty$, 
Lemma \ref{lemma:transfer} implies that $f_2$ is a surjection, so
$f_1$ is a surjection, as desired.

\section{Proof of Theorem \ref{theorem:jkervanish}}

\subsection{Two facts about $\Sp_{2g}(\Z)$}

We will need two standard facts about finite-index subgroups $\Gamma$ of $\Sp_{2g}(\Z)$, both
of which follow from the fact that $\Gamma$ is a lattice in $\Sp_{2g}(\R)$.

For the first, since $\Sp_{2g}(\R)$ is a connected simple Lie group with finite center and real rank $g$, the group
$\Gamma$ has Kazhdan's property (T) when $g \geq 2$ (see, e.g., \cite[Theorem 7.1.4]{ZimmerBook}).  One standard
property of groups with property (T) is that they have no nontrivial homomorphisms to $\R$
(see, e.g., \cite[Theorem 7.1.7]{ZimmerBook}).  Combining these facts, we obtain the following theorem.

\begin{theorem}
\label{theorem:propertyt}
For some $g \geq 2$, let $\Gamma < \Sp_{2g}(\Z)$ satisfy $[\Sp_{2g}(\Z):\Gamma] < \infty$.  Then
$\HH_1(\Gamma;\R) = 0$.
\end{theorem}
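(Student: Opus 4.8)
The plan is to reduce the statement to the assertion that $\Gamma$ admits no nontrivial homomorphism to $\R$, and then to extract that assertion from the representation theory of the ambient Lie group by way of Kazhdan's property (T).

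First I would translate homology into homomorphisms. Since $\R$ is a trivial $\Gamma$-module with no torsion, the universal coefficients theorem gives $\HH_1(\Gamma;\R) \cong \Gamma^{\mathrm{ab}} \otimes_{\Z} \R$, where $\Gamma^{\mathrm{ab}} = \HH_1(\Gamma;\Z)$ is the abelianization. As $\Gamma$ is a lattice in a semisimple Lie group it is finitely generated, so $\Gamma^{\mathrm{ab}}$ is a finitely generated abelian group and $\Gamma^{\mathrm{ab}} \otimes_{\Z} \R$ is a finite-dimensional real vector space. This space vanishes precisely when $\Gamma^{\mathrm{ab}}$ is finite, which in turn holds precisely when every homomorphism $\Gamma \to \R$ is trivial, i.e.\ when $\Hom(\Gamma,\R) = 0$. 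Thus it suffices to prove $\Hom(\Gamma,\R) = 0$.

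Next I would invoke the structure of the ambient group together with property (T). The group $\Sp_{2g}(\R)$ is a connected simple Lie group with finite center whose real rank equals $g$, which is at least $2$ by hypothesis; lattices in such higher-rank groups satisfy Kazhdan's property (T), so $\Gamma$ has property (T). A group with property (T) has finite abelianization, and in particular admits no nontrivial homomorphism to $\R$: a nonzero $\phi \colon \Gamma \to \R$ would let $\Gamma$ act on $L^2(\R)$ through the translation action of $\R$ on itself, producing a unitary representation with almost-invariant vectors (take normalized indicators of long intervals) but, since the translation action is free for $\phi \neq 0$, no nonzero invariant vectors, contradicting (T). Combining this with the reduction above gives $\HH_1(\Gamma;\R) = 0$.

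At this level of generality the argument has no genuinely difficult step, since both inputs are standard: that higher-rank lattices enjoy property (T), and that property (T) forbids surjections onto $\R$. The only place where substantial work is concealed is the first of these, whose proof rests on the representation-theoretic machinery establishing property (T) for $\Sp_{2g}(\R)$ and its lattices; deriving that from scratch, rather than citing it, would be the main obstacle. I would also note that the hypothesis $g \geq 2$ is exactly what forces the real rank to be at least $2$, and that the conclusion fails in the rank-one case $g = 1$ (finite-index subgroups of $\Sp_2(\Z) = \SL_2(\Z)$ can surject onto $\Z$), so this bound is sharp.
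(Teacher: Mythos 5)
Your proof is correct and follows essentially the same route as the paper: both deduce property (T) for $\Gamma$ from the fact that $\Sp_{2g}(\R)$ is a connected simple Lie group of real rank $g \geq 2$ with finite center, and then use the standard fact that property (T) groups admit no nontrivial homomorphisms to $\R$. The only difference is that the paper simply cites Zimmer's book for both inputs, whereas you additionally spell out the universal-coefficients reduction and sketch the $L^2(\R)$ almost-invariant-vectors argument showing that property (T) forbids nontrivial homomorphisms to $\R$.
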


For the second, since $\Sp_{2g}(\R)$ is a connected noncompact simple real algebraic group, we can apply
the Borel density theorem (see, e.g., \cite[Theorem 3.2.5]{ZimmerBook}) to deduce that $\Gamma$ is
Zariski dense in $\Sp_{2g}(\R)$.  This implies that any finite dimensional
nontrivial irreducible $\Sp_{2g}(\R)$-representation $V$ must
also be an irreducible $\Gamma$-representation; indeed, if $V'$ was a nontrivial proper $\Gamma$-submodule of $V$, 
then the subgroup of $\Sp_{2g}(\R)$ preserving $V'$ would be a proper subvariety 
of $\Sp_{2g}(\R)$ containing $\Gamma$.  Recall that the ring of coinvariants $V_{\Gamma}$ of $V$ under $\Gamma$ is the
quotient $V / K$, where $K = \Span{\text{$x-g(x)$ $|$ $x \in V$, $g \in \Gamma$}}$.  Since $K \neq 0$, we can apply
Schur's lemma to deduce that $K = V$, i.e.\ that $V_{\Gamma} = 0$.  We record this fact as the following theorem.

\begin{theorem}
\label{theorem:borelstab}
For some $g \geq 1$, let $\Gamma < \Sp_{2g}(\Z)$ satisfy $[\Sp_{2g}(\Z):\Gamma] < \infty$ and let
$V$ be a nontrivial irreducible $\Sp_{2g}(\R)$-representation.  Then $V_{\Gamma} = 0$.
\end{theorem}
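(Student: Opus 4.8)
The plan is to leverage the Zariski density of $\Gamma$ in $\Sp_{2g}(\R)$ to promote the $\Sp_{2g}(\R)$-irreducibility of $V$ to $\Gamma$-irreducibility, and then to recognize the coinvariants as the quotient of $V$ by a nonzero $\Gamma$-submodule. First I would record that $\Gamma$ is Zariski dense in $\Sp_{2g}(\R)$: since $\Sp_{2g}(\Z)$ is a lattice in $\Sp_{2g}(\R)$ and $[\Sp_{2g}(\Z):\Gamma]<\infty$, the group $\Gamma$ is itself a lattice, so the Borel density theorem applies exactly as quoted in the discussion above.

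Next I would transfer irreducibility. Let $V' \subseteq V$ be any $\Gamma$-invariant subspace. Its stabilizer $\{g \in \Sp_{2g}(\R) \mid g(V')=V'\}$ is a Zariski-closed subgroup of $\Sp_{2g}(\R)$ containing the Zariski-dense subgroup $\Gamma$, hence equals all of $\Sp_{2g}(\R)$; thus $V'$ is in fact $\Sp_{2g}(\R)$-invariant. Since $V$ is irreducible as an $\Sp_{2g}(\R)$-representation, this forces $V'=0$ or $V'=V$, so $V$ is irreducible as a $\Gamma$-representation.

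I would then examine the submodule $K = \Span{\text{$x-g(x)$ $|$ $x \in V$, $g \in \Gamma$}}$ cutting out the coinvariants, $V_{\Gamma}=V/K$. A direct computation shows that $K$ is $\Gamma$-invariant: for $h \in \Gamma$ we have $h(x-g(x)) = (hx) - (hgh^{-1})(hx)$, and $hgh^{-1}\in\Gamma$, so this element again lies in $K$. Moreover $K \neq 0$, for otherwise $\Gamma$ would act trivially on $V$, and Zariski density would then force $\Sp_{2g}(\R)$ to act trivially on $V$ as well, contradicting the hypothesis that $V$ is nontrivial.

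Combining the last two steps finishes the argument: $K$ is a nonzero $\Gamma$-submodule of the $\Gamma$-irreducible module $V$, so $K=V$ and therefore $V_{\Gamma}=V/K=0$. (This is the content the author phrases via Schur's lemma; I find it cleaner to invoke $\Gamma$-irreducibility directly, since the needed statement is precisely that a proper submodule vanishes.) I expect the only point requiring any care to be the verification that $K$ is genuinely $\Gamma$-stable, together with the nonvanishing $K\neq 0$; the Borel density input is a cited black box, and the passage from density to $\Gamma$-irreducibility is routine.
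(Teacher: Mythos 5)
Your proposal is correct and follows essentially the same route as the paper: Borel density gives Zariski density of $\Gamma$, this promotes $\Sp_{2g}(\R)$-irreducibility of $V$ to $\Gamma$-irreducibility via the stabilizer-is-a-subvariety argument, and then the nonzero $\Gamma$-submodule $K$ defining the coinvariants must be all of $V$. The only difference is that you make explicit two details the paper leaves implicit --- the verification that $K$ is $\Gamma$-stable and the reason $K \neq 0$ --- which is a welcome clarification rather than a different approach.
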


\subsection{Two preliminary lemmas}

We will need two lemmas.  The first is the following, which
slightly generalizes a theorem of Johnson \cite{JohnsonKg}.

\begin{lemma}
\label{lemma:torellimodkg}
For $g \geq 3$, we have $\Torelli_{g,b}^p / \JKer_{g,b}^p \cong (\wedge^3 H)/H \oplus H^{b+p}$, 
where $H = \HH_1(\Sigma_g;\Z)$.
\end{lemma}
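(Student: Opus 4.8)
The plan is to reduce to a surface with punctures but no boundary and then induct on the number of punctures, taking Johnson's theorem for a closed surface as the base case. For the reduction, collapsing a boundary component $\beta$ to a puncture gives the central extension
$$1 \to \Z \to \Mod(\Sigma_{g,b}^p) \to \Mod(\Sigma_{g,b-1}^{p+1}) \to 1$$
whose kernel is generated by the boundary twist $T_\beta$. Since $\beta$ is separating we have $T_\beta\in\JKer_{g,b}^p$, and as $T_\beta$ acts trivially on $\HH_1(\Sigma_g;\Z)$ this extension restricts to the Torelli groups. Using that $\JKer_{g,b}^p$ is normal and surjects onto $\JKer_{g,b-1}^{p+1}$, a short diagram chase shows that collapsing $\beta$ induces an isomorphism $\Torelli_{g,b}^p/\JKer_{g,b}^p\cong\Torelli_{g,b-1}^{p+1}/\JKer_{g,b-1}^{p+1}$. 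Iterating, it suffices to prove $\Torelli_g^n/\JKer_g^n\cong(\wedge^3 H)/H\oplus H^n$ for $n=b+p$.

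I induct on $n$, the base case $n=0$ being Johnson's theorem \cite{JohnsonKg}, which gives $\Torelli_g/\JKer_g\cong(\wedge^3 H)/H$ for $g\ge3$. For the inductive step I use the Birman exact sequence obtained by filling in the last puncture,
$$1 \to \pi_1(\Sigma_g^n) \to \Torelli_g^{n+1} \to \Torelli_g^n \to 1,$$
whose kernel is the point--pushing subgroup; this does land in the Torelli group because a point--push acts trivially on $\HH_1(\Sigma_g;\Z)$. Since $\JKer_g^{n+1}$ is normal and maps onto $\JKer_g^n$, reducing modulo the Johnson kernels yields a short exact sequence
$$1 \to \pi_1(\Sigma_g^n)/(\pi_1(\Sigma_g^n)\cap\JKer_g^{n+1}) \to \Torelli_g^{n+1}/\JKer_g^{n+1} \to \Torelli_g^n/\JKer_g^n \to 1.$$

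It remains to identify the left--hand term with $H$ and to split this sequence. The point--pushing map $\pi_1(\Sigma_g^n)\to\Torelli_g^{n+1}$ carries each generator of $\Ker(\pi_1(\Sigma_g^n)\to H)$ into $\JKer_g^{n+1}$: a loop around a puncture pushes to $T_{\alpha_1}T_{\alpha_2}^{-1}$ with $\alpha_1,\alpha_2$ separating, and a commutator pushes to twists about the separating boundary of the handle it bounds. Hence the induced map $\pi_1(\Sigma_g^n)\to\Torelli_g^{n+1}/\JKer_g^{n+1}$ factors through $H=\HH_1(\Sigma_g;\Z)$. To see that its image is all of $H$ and that the sequence splits, I invoke the Johnson homomorphism $r\colon\Torelli_g^{n+1}\to H$ attached to the last puncture, which vanishes on separating twists and sends a point--push along $\gamma$ to $[\gamma]$. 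This descends to $\Torelli_g^{n+1}/\JKer_g^{n+1}$ and retracts onto the image of the point--pushing subgroup, so the sequence splits with kernel exactly $H$. Therefore $\Torelli_g^{n+1}/\JKer_g^{n+1}\cong H\oplus\Torelli_g^n/\JKer_g^n$, and the inductive hypothesis gives $(\wedge^3 H)/H\oplus H^{n+1}$, completing the induction.

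The substance of the argument lies not in the exact--sequence bookkeeping but in two geometric inputs feeding it. The first is that a null--homologous loop point--pushes to a product of separating twists, which is what makes the point--pushing map factor through $H$; the second is the construction of the last--puncture Johnson homomorphism $r$ with the stated values, which both pins the image down to all of $H$ and furnishes the splitting. I expect the genuinely hard ingredient to be none of these but rather the base case itself: Johnson's identification of $\JKer_g$ with the kernel of the Johnson homomorphism on a closed surface is where the hypothesis $g\ge3$ is essential, and the induction above does no more than transport that single deep result across the punctures via split short exact sequences.
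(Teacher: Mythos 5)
Your proposal follows the paper's skeleton for most of its length: the reduction to $b=0$ by capping boundary components (using that boundary twists lie in $\JKer$), the induction on punctures via the Birman exact sequence, and the geometric argument that null-homologous loops point-push into the Johnson kernel, so that the kernel term $A = \pi_1(\Sigma_g^n)/(\pi_1(\Sigma_g^n)\cap\JKer_g^{n+1})$ is a quotient of $H$ --- all of this is exactly what the paper does. Where you genuinely diverge is the endgame. The paper pins down $A$ by quoting Hain's computation $\HH_1(\Torelli_g^p;\R)\cong(\wedge^3 H_{\R})/H_{\R}\oplus H_{\R}^p$ together with Johnson's result that $\JKer_g^p$ dies in $\HH_1(\Torelli_g^p;\R)$, and then runs a dimension count showing $A$ cannot be a proper quotient of $H$. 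You instead manufacture an explicit retraction: an $H$-valued homomorphism $r$ on $\Torelli_g^{n+1}$ that kills $\JKer_g^{n+1}$ and restricts to the identity on the point-pushing image, which simultaneously shows the surjection $H\to A$ is injective and exhibits $A$ as a direct factor (the image of $A$ and $\Ker(\bar r)$ are normal subgroups intersecting trivially whose product is everything). Your route buys independence from Hain's theorem --- the only deep input left is Johnson's closed-surface theorem in the base case --- and it also handles the direct-sum decomposition more transparently: the paper's step from ``the action on $A$ is trivial'' to ``the quotient is the abelian group $(\wedge^3 H)/H\oplus H^{p-1}\oplus A$'' is not automatic (a central extension of an abelian group by an abelian group need not be abelian or split), and really needs the dimension count to rule out a Heisenberg-type extension, whereas your retraction settles it outright.

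The one place you owe more detail is the tool you ``invoke.'' There is no off-the-shelf ``Johnson homomorphism $\Torelli_g^{n+1}\to H$ attached to the last puncture'' with your normalization. The standard object, obtained by filling in the other $n$ punctures, is $\tau\colon\Torelli_g^1\to\wedge^3 H$, which vanishes on separating twists and satisfies $\tau(\mathrm{push}(\gamma))=[\gamma]\wedge\omega$, where $\omega\in\wedge^2 H$ is the symplectic class. To get your $r$ you must compose $\tau$ with a retraction $\rho\colon\wedge^3 H\to H$ of the map $x\mapsto x\wedge\omega$; such a $\rho$ exists, but only because $\omega\wedge H$ is a primitive sublattice of $\wedge^3 H$ (equivalently, $(\wedge^3 H)/H$ is torsion-free), an elementary but necessary verification. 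Beware that the natural $\Sp$-equivariant candidate, the symplectic contraction $\wedge^3 H\to H$, sends $[\gamma]\wedge\omega$ to $(g-1)[\gamma]$ rather than $[\gamma]$; with that normalization the composite $H\to A\to H$ is multiplication by $g-1$, the product of $A$ with $\Ker(\bar r)$ is only a finite-index subgroup of the quotient, and your splitting argument breaks over $\Z$. With $\rho$ chosen as above, your argument is complete and correct.
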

\begin{proof}
Since $\JKer_{g,b}^p$ contains all twists about boundary curves, we can assume that $b=0$.

Building on work of Johnson \cite{JohnsonAbel}, Hain \cite{HainTorelli} proved
that
$$\HH_1(\Torelli_{g}^p;\R) \cong (\wedge^3 H_{\R})/H_{\R} \oplus H_{\R}^{p},$$
where $H_{\R} = \HH_1(\Sigma_{g};\R)$.
Also, Johnson \cite[Lemma 2]{JohnsonAbel} proved that for $x \in \JKer_{g}^p$, we have 
$[x]_{\Torelli_{g}^p} = 0$ (Johnson only considered
the case where $p=0$, but his argument works in general).  It follows that
\begin{equation}
\label{eqn:abelianization}
\HH_1(\Torelli_{g}^p / \JKer_{g}^p;\R) \cong (\wedge^3 H_{\R})/H_{\R} \oplus H_{\R}^{p}.
\end{equation}

We will prove the lemma by induction on $p$.  The base case $p=0$ is a theorem of Johnson \cite{JohnsonKg}.
Assume now that $p > 0$ and that the lemma is true for all smaller $p$.
Fixing a puncture $\ast$ of $\Sigma_{g}^p$,
work of Birman \cite{BirmanSeq} and Johnson \cite{JohnsonAbel} gives
an exact sequence
$$\begin{CD}
1 @>>> \pi_1(\Sigma_{g}^{p-1},\ast) @>>> \Torelli_{g}^p @>>> \Torelli_{g}^{p-1} @>>> 1,
\end{CD}$$
where the map $\Torelli_{g}^p \rightarrow \Torelli_{g}^{p-1}$ comes from ``forgetting the puncture $\ast$''.
Quotienting out by $\JKer_{g}^p$, we obtain an exact sequence
$$\begin{CD}
1 @>>> \pi_1(\Sigma_{g}^{p-1},\ast) / (\pi_1(\Sigma_{g}^{p-1},\ast) \cap \JKer_{g}^p) @>>> \Torelli_{g}^p / \JKer_{g}^p @>>> \Torelli_{g}^{p-1} / \JKer_{g}^{p-1} @>>> 1.
\end{CD}$$
By induction, we have
$$\Torelli_{g}^{p-1} / \JKer_{g}^{p-1} \cong (\wedge^3 H)/H \oplus H^{p-1}.$$
Set $A = \pi_1(\Sigma_{g}^{p-1},\ast) / (\pi_1(\Sigma_{g}^{p-1},\ast) \cap \JKer_{g}^p)$.  
We will prove that $A$
is a quotient of $H$.  We will then be able to conclude that 
$\Torelli_{g}^{p-1} / \JKer_{g}^{p-1}$ acts trivially on $A$,
so $\Torelli_{g}^p / \JKer_{g}^p$ is the abelian group
$$(\wedge^3 H)/H \oplus H^{p-1} \oplus A.$$
Using \eqref{eqn:abelianization}, a simple dimension
count will then imply that $A$ cannot
be a proper quotient of $H$, and the lemma will follow.

\Figure{figure:birmanexactsequence}{BirmanExactSequence}{a--f. Curves needed for proof of Lemma \ref{lemma:torellimodkg}}

The element of $\Torelli_{g}^p$ corresponding to $\delta \in \pi_1(\Sigma_{g}^{p-1},\ast)$ ``drags'' $\ast$
around $\delta$.  As shown in Figures \ref{figure:birmanexactsequence}.a--b, a simple
closed curve $\gamma \in \pi_1(\Sigma_{g}^{p-1},\ast)$
corresponds to $T_{\gamma_1} T_{\gamma_2}^{-1} \in \Torelli_{g}^p$, where $\gamma_1$ and $\gamma_2$ are the
boundary components of a regular neighborhood of $\gamma$.  In particular, if $\gamma$ is a simple closed separating
curve, then as shown in Figures \ref{figure:birmanexactsequence}.c--d, the corresponding element of
$\Torelli_{g}^p$ is a product of separating twists.  Since $[\pi_1(\Sigma_{g}^{p-1},\ast),\pi_1(\Sigma_{g}^{p-1},\ast)]$
is generated by simple closed separating curves (see, e.g.,\ \cite[Lemma A.1]{PutmanCutPaste}), we deduce 
that $[\pi_1(\Sigma_{g}^{p-1},\ast),\pi_1(\Sigma_{g}^{p-1},\ast)] \subset \pi_1(\Sigma_{g}^{p-1},\ast) \cap \JKer_{g}^p$.
Thus $A=\pi_1(\Sigma_{g}^{p-1},\ast) / (\pi_1(\Sigma_{g}^{p-1},\ast) \cap \JKer_{g}^p)$ is a quotient of
$\HH_1(\Sigma_{g}^{p-1};\Z)$.
Finally, as shown in Figures \ref{figure:birmanexactsequence}.e--f, all simple closed curves that are
homotopic into punctures are also contained in $\pi_1(\Sigma_{g}^{p-1},\ast) \cap \JKer_{g}^p$, so we conclude
that $A$ is a quotient of $H=\HH_1(\Sigma_{g};\Z)$,
as desired.
\end{proof}

For the second lemma, define $Q_{g,b}^p = \Mod_{g,b}^p / \JKer_{g,b}^p$.  

\begin{lemma}
\label{lemma:qdies}
For some $g \geq 3$, let $Q'<Q_{g,b}^p$ satisfy $[Q_{b,b}^p:Q'] < \infty$.  Then
$\HH_1(Q';\R) = 0$.
\end{lemma}
\begin{proof}
Restricting the short exact sequence
$$\begin{CD}
1 @>>> \Torelli_{g,b}^p / \JKer_{g,b}^p @>>> Q_{b,b}^p @>>> \Sp_{2g}(\Z) @>>> 1
\end{CD}$$
to $Q'$, we obtain a short exact sequence
$$\begin{CD}
1 @>>> B @>>> Q' @>>> \overline{Q}' @>>> 1,
\end{CD}$$
where $B$ and $\overline{Q}'$ are finite index subgroups of $\Torelli_{g,b}^p / \JKer_{g,b}^p$ and
$\Sp_{2g}(\Z)$, respectively.  The last 3 terms of the associated 5-term exact sequence are
$$\begin{CD}
(\HH_1(B;\R))_{\overline{Q}'} @>>> \HH_1(Q';\R) @>>> \HH_1(\overline{Q}';\R) @>>> 0.
\end{CD}$$
By Theorem \ref{theorem:propertyt}, we have $\HH_1(\overline{Q}';\R) = 0$.  Letting 
$H = \HH_1(\Sigma_g;\Z)$, Lemma \ref{lemma:torellimodkg} says
that 
$$\Torelli_{g,b}^p / \JKer_{g,b}^p \cong (\wedge^3 H)/H \oplus H^{b+p}.$$  
Since $B$ is a finite-index subgroup of $\Torelli_{g,b}^p / \JKer_{g,b}^p$, we get that $B$ is itself abelian and
$$\HH_1(B;\R) \cong B \otimes \R \cong (\Torelli_{g,b}^p / \JKer_{g,b}^p) \otimes \R \cong (\wedge^3 H_{\R})/H_{\R} \oplus H_{\R}^{b+p},$$
where $H_{\R} = \HH_1(\Sigma_g;\R)$.  
Both $(\wedge^3 H_{\R})/H_{\R}$ and $H_{\R}$ are nontrivial finite-dimensional irreducible representations of $\Sp_{2g}(\R)$, so
Theorem \ref{theorem:borelstab} implies that $(\HH_1(B;\R))_{\overline{Q}'} = 0$, and we are done.
\end{proof}

\subsection{The proof of Theorem \ref{theorem:jkervanish}}

The last 3 terms of the 5-term exact sequence associated to the
short exact sequence
$$\begin{CD}
1 @>>> \Gamma \cap \JKer_{g,b}^p @>>> \Gamma @>>> \Gamma / (\Gamma \cap \JKer_{g,b}^p) @>>> 1
\end{CD}$$
are
$$\begin{CD}
(\HH_1(\Gamma \cap \JKer_{g,b}^p;\R))_{\Gamma / (\Gamma \cap \JKer_{g,b}^p)} @>{i}>> \HH_1(\Gamma;\R) @>>> \HH_1(\Gamma / (\Gamma \cap \JKer_{g,b}^p);\R) @>>> 0.
\end{CD}$$
By assumption, $[\Gamma \cap \JKer_{g,b}^p:K(\Gamma)] < \infty$, so Lemma \ref{lemma:transfer}
implies that the map $\HH_1(K(\Gamma);\R) \rightarrow \HH_1(\Gamma \cap \JKer_{g,b}^p;\R)$ is surjective.
Since $K(\Gamma)$ is generated by powers of twists, Theorem
\ref{theorem:dehntwistsvanish} allows us to deduce that $i=0$.  Also, $\Gamma / (\Gamma \cap \JKer_{g,b}^p)$ is a finite-index
subgroup of $Q_{g,b}^p$, so Lemma \ref{lemma:qdies} implies that $\HH_1(\Gamma / (\Gamma \cap \JKer_{g,b}^p);\R)=0$, and
we are done.

\noindent
Department of Mathematics; MIT, 2-306 \\
77 Massachusetts Avenue \\
Cambridge, MA 02139-4307 \\
E-mail: {\tt andyp@math.mit.edu}
\medskip

\end{document}